\begin{document}
\newtheorem{thm}{Theorem}[section]
\newtheorem{cor}[thm]{Corollary}
\newtheorem{lem}[thm]{Lemma}
\newtheorem{exm}[thm]{Example}
\newtheorem{prop}[thm]{Proposition}
\theoremstyle{definition}
\newtheorem{defn}[thm]{Definition}
\newtheorem{rem}[thm]{Remark}
\numberwithin{equation}{section}
\begin{center}
{\LARGE\bf Notes on  Krasnoselskii-type fixed-point theorems and their application
to fractional hybrid differential problems}
\footnote{MSC2010:	26A33,  34A08, 34A12, 47H07, 47H10.
Keywords and phrases:  fixed-point theorem, Riemann-Liouville  fractional derivative,
hybrid initial value problem.}
\\
\vspace{.25in} {\large{H. Akhadkulov\footnote{School of Quantitative Sciences, University Utara Malaysia, CAS 06010, UUM  Sintok, Kedah Darul Aman, Malaysia. Email: habibulla@uum.edu.my (H.A), yuanying@uum.edu.my (T.Y.Y), azizan.s@uum.edu.my (A.B.S), linda@uum.edu.my (H.I)}},
T. Y. Ying$^2$, A. B. Saaban$^2$,
M. S. Noorani\footnote{School of Mathematical Sciences Faculty of Science and Technology University Kebangsaan Malaysia, 43600 UKM Bangi, Selangor Darul Ehsan, Malaysia.
Email: msn@ukm.edu.my}, and H. Ibrahim$^2$}
\end{center}

\title{Notes on  Krasnoselskii-type fixed-point theorems and their application
to fractional hybrid differential problems}

\begin{abstract}
 In this paper we prove
a new version of Kransoselskii's fixed-point
theorem under a ($\psi, \theta, \varphi$)-weak contraction
condition. The theoretical result is applied to
 prove the existence of a solution of the following fractional hybrid differential equation involving
 the Riemann-Liouville differential and integral operators  orders of $0<\alpha<1$ and $\beta>0:$
 \begin{equation}\nonumber
 \left\{\begin{array}{ll}
D^{\alpha}[x(t)-f(t, x(t))]=g(t, x(t), I^{\beta}(x(t))), \,\,\, \text{a.e.}
\,\,\, t\in J,\,\, \beta>0,\\
x(t_{0})=x_{0},
\end{array}
\right.
 \end{equation}
 where $D^{\alpha}$ is the Riemann-Liouville fractional derivative  order of $\alpha,$
 $I^{\beta}$ is Riemann-Liouville fractional integral operator
order of $\beta>0,$  $J=[t_{0}, t_{0}+a],$ for some fixed $t_{0}\in \mathbb{R},$ $a>0$
 and the functions $f:J\times \mathbb{R}\rightarrow \mathbb{R}$ and
 $g:J\times \mathbb{R}\times \mathbb{R}\rightarrow \mathbb{R}$ satisfy certain conditions.
 An example is also furnished to illustrate the hypotheses and the
abstract result of this paper.
\end{abstract}

\section{Introduction}
Fixed-point theory has experienced quick improvement over
the most recent quite a few years. The development has
been firmly advanced by the vast number of utilizations in
the existence theory of functional, fractional, differential,
partial differential, and integral equations.
Two fundamental theorems concerning fixed points are those of Schauder and
of Banach.
The Schauder's fixed point theorem, involving
a compactness condition, may be stated as "\emph{if $S$ is a closed convex and
bounded subset of a Banach space $X,$ then every completely continuous operator $A:S\rightarrow S$ has at
least one fixed point}".
Note that an operator $A$ on a Banach space $X$ is called completely continuous
if it is continuous and $A(D)$ is totally bounded for any bounded subset $D$ of $X.$
Banach's fixed point theorem, involving a metric assumption on the mapping, 
states that "\emph{if $X$ is complete metric space and if $A$ is a contraction on $X,$ then it has a unique fixed
point, i.e., there is a unique point $x^{*}\in X$ such that $Ax^{*}=x^{*}.$ Moreover, the sequence $A^{n}x$
converges to $x^{*}$  for every $x\in X,$}".
The idea of the hybrid fixed point theorems, that is,  a blend of the nonlinear contraction principle and Schauder's fixed-point theorem
goes back to 1964, with Krasnoselskii \cite{Krasno}, who  still maintains an interest in the subject.
He gave intriguing applications to differential equations by finding the existence
of solutions under some hybrid conditions.
Burton \cite{Bur} extended Krasnoselskii's result for a wide class of operators in 1998.
In 2013, Dhage \cite{Dhag1} and Dhage and Lakshmikantham \cite{Dhag2} proposed an important Krasnoselskii-type
fixed-point theorems and applied them the following first-order hybrid differential equation with linear perturbations
of first type:
 \begin{equation}\label{0IE1}
 \left\{\begin{array}{ll}
\frac{d}{dx}\Big[\frac{x(t)}{f(t,x(t))}\Big]=g(t, x(t)), \,\,\, \text{a.e.}
\,\,\, t\in J,\\
x(t_{0})=x_{0}\in \mathbb{R},
\end{array}
\right.
 \end{equation}
 where $J=[t_{0}, t_{0}+a],$ for some fixed $t_{0}\in \mathbb{R},$ $a>0$
 and $f\in C(J\times \mathbb{R}, \mathbb{R}\setminus \{0\}),$
 $g\in \mathcal{C}(J\times \mathbb{R}, \mathbb{R}).$
In the same year, Dhage and Jadhav \cite{Dhag3} studied the existence of
solution for hybrid differential equation with linear perturbations of second type:
\begin{equation}\label{1IE1}
 \left\{\begin{array}{ll}
\frac{d}{dx}\Big[x(t)-f(t,x(t))\Big]=g(t, x(t)),
\,\,\, t\in J,\\
x(t_{0})=x_{0}\in \mathbb{R},
\end{array}
\right.
 \end{equation}
 where $f,g\in C(J\times \mathbb{R}, \mathbb{R}\setminus \{0\}).$
 They established the existence and uniqueness results and some fundamental differential
 inequalities for hybrid differential equations initiating the study of theory of such systems and proved
 utilizing the theory of inequalities, its existence of extremal solutions and a comparison result.
In \cite{Lu},  Lu et al. proved an existence theorem  for
fractional hybrid differential equations under a $\varphi$-Lipschitz contraction condition
and applying this theorem they develop the theory of fractional hybrid differential equations with
linear perturbations of second type involving Riemann-Liouville differential operators
 order of $0<q<1:$
\begin{equation}\label{2IE1}
 \left\{\begin{array}{ll}
D^{q}\Big[x(t)-f(t,x(t))\Big]=g(t, x(t)), \,\,\, \text{a.e.}
\,\,\, t\in J,\\
x(t_{0})=x_{0}\in \mathbb{R},
\end{array}
\right.
 \end{equation}
 where $f, g\in \mathcal{C}(J\times \mathbb{R}, \mathbb{R}).$

 In recent years,  a number of excellent results concerning  the existence of solutions and their approximation solutions
 for  nonlinear initial value problems of first and second orders hybrid functional, differential,
 integrodifferential equations have been obtained by   Dhage et al.
 in \cite{Dhag4}-\cite{Dhag7}. In \cite{Dhag4},
 the existence and approximation result for the following first order
nonlinear initial value problem of hybrid functional integrodifferential equations  is proven
\begin{equation}\label{3IE1}
 \left\{\begin{array}{ll}
\frac{d}{dx}\Big[x(t)-f(t,x(t))\Big]=\int_{0}^{t}g(s, x_{s})ds,
\,\,\, t\in J,\\
x_{0}=\phi,
\end{array}
\right.
 \end{equation}
 where $\phi, f, g\in C(J\times \mathbb{R}, \mathbb{R}).$
 In \cite{Dhag5}, it is proven some basic hybrid fixed point theorems for
 the sum and product of two operators defined in a Banach algebra.
 And applications of the newly developed abstract hybrid fixed point theorems
 to nonlinear hybrid linearly perturbed and quadratic integral equations
 are given. In \cite{Dhag6}, it is studied the following  nonlinear initial value
problem of second order hybrid functional differential equations
\begin{equation}\label{4IE1}
 \left\{\begin{array}{ll}
\frac{d}{dx}\Big[x'(t)-f(t,x_{t})\Big]=g(t, x_{t}),
\,\,\, t\in J,\\
x_{0}=\phi,\,\,\,x'(0)=\eta
\end{array}
\right.
 \end{equation}
 where $f, g \in C(J\times \mathbb{R}, \mathbb{R}).$
The existence and approximation result for these differential equations is proven.
In \cite{Dhag7}, it is  investigated  a system of two non-homogeneous
boundary value problems of coupled hybrid integro-differential
equations of fractional order.

In line with the above works, our purpose in this paper is to
developed  a new version of Kransoselskii's  fixed-point
theorem under a weak  contraction condition and utilizing this  theorem
to show the existence of a solution of the following  fractional hybrid differential equation involving
 the Riemann-Liouville differential and integral operators  orders of $0<\alpha<1$ and $\beta>0:$
\begin{equation}\label{IE1}
 \left\{\begin{array}{ll}
D^{\alpha}[x(t)-f(t, x(t))]=g(t, x(t), I^{\beta}(x(t))), \,\,\, \text{a.e.}
\,\,\, t\in J,\,\, \beta>0,\\
x(t_{0})=x_{0},
\end{array}
\right.
 \end{equation}
 where $J=[t_{0}, t_{0}+a],$ for some fixed $t_{0}\in \mathbb{R}$ and $a>0$
 and $f\in C(J\times \mathbb{R}, \mathbb{R}),$
 $g\in \mathcal{C}(J\times \mathbb{R}\times \mathbb{R}, \mathbb{R}).$
 The paper is organized as follows. In Section 2, we review some definitions
and rigorous results of fixed point theory, fractional calculus and
partially ordered Banach space. In Section 3, we prove
a new version of Kransoselskii's fixed-point
theorem under a weak  contraction condition. Utilizing this theorem, in Section 4, we
 prove the existence of a solution of a fractional hybrid differential equation involving
 the Riemann-Liouville differential and integral operators  orders of $0<\alpha<1$ and $\beta>0.$
In Section 5, we provide an illustrative example to highlight the realized improvements.

\section{Preliminaries}
\subsection{Integrals and derivatives of fractional order}
In this subsection, we recall some basic notions, concepts and definitions of integrals and derivatives of fractional order. The following are discussions of some of the concepts we will need.
Let $C(J\times \mathbb{R}\times \mathbb{R}, \mathbb{R})$ denote the class of continuous functions
$f:J\times \mathbb{R}\times \mathbb{R}\rightarrow \mathbb{R}$ and
$\mathcal{C}(J\times \mathbb{R}\times \mathbb{R}, \mathbb{R})$  denote the class of functions
$g:J\times \mathbb{R}\times \mathbb{R}\rightarrow \mathbb{R}$  such that
\begin{enumerate}
  \item [(i)] the map $t\rightarrow g(t, x, y)$ is measurable for each $x, y\in \mathbb{R},$
  \item [(ii)] the map $x\rightarrow g(t, x, y)$  is continuous for each $x\in \mathbb{R},$
  \item [(iii)] the map $y\rightarrow g(t, x, y)$  is continuous for each $y\in \mathbb{R}.$
\end{enumerate}
The class $\mathcal{C}(J\times \mathbb{R}\times \mathbb{R}, \mathbb{R})$ is called the Carath\'{e}odory class of functions on
$J\times \mathbb{R}\times \mathbb{R},$ which are Lebesgue integrable when bounded by a Lebesgue integrable function on $J.$

\begin{defn}[\cite{Kilbas}]
The form of the Riemann-Liouville fractional integral operator of order
$\alpha>0,$ of function $f\in L^{1}(\mathbb{R}^{+})$ is defined as
$$
I^{\alpha}f(x)=\frac{1}{\Gamma(\alpha)}\int_{0}^{x}(x-s)^{\alpha-1}f(s)ds.
$$
\end{defn}


\begin{defn}[\cite{Kilbas}]
The Riemann-Liouville derivative of fractional order $\alpha>0$ of a continuous function
$f:(0, \infty)\rightarrow \mathbb{R}$ is defined as
$$
D^{\alpha}f(x)=\frac{1}{\Gamma(m-\alpha)}\frac{d^{m}}{dx^{m}}\int_{0}^{x}(x-s)^{m-\alpha-1}f(s)ds,
$$
where $m=[\alpha]+1.$
\end{defn}
The following lemma will be used in the sequel.
\begin{lem}[\cite{Kilbas, Podlu}] Let $0<\alpha<1$ and $f\in L^{1}(0,1).$ Then
\begin{itemize}
  \item [(1)] the equality $D^{\alpha}I^{\alpha}f(x)=f(x)$ holds;
  \item [(2)] the equality
  $$
  I^{\alpha}D^{\alpha}f(x)=f(x)-\frac{[D^{\alpha-1}f(x)]_{x=0}}{\Gamma(\alpha)}x^{\alpha-1}
  $$
  holds for almost everywhere on $J.$
\end{itemize}
\end{lem}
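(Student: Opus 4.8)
The plan is to reduce both identities to the semigroup law $I^{\alpha}I^{\beta}=I^{\alpha+\beta}$ for Riemann--Liouville integrals together with the Lebesgue differentiation theorem, exploiting that for $0<\alpha<1$ one has $m=[\alpha]+1=1$, so that $D^{\alpha}=\frac{d}{dx}I^{1-\alpha}$ and $D^{\alpha-1}=I^{1-\alpha}$. First I would establish the composition rule: writing $I^{\alpha}I^{\beta}f$ as a double integral, applying Fubini's theorem, and evaluating the inner kernel $\int_{\tau}^{x}(x-s)^{\alpha-1}(s-\tau)^{\beta-1}\,ds$ through the substitution $s=\tau+(x-\tau)u$ in terms of the Beta integral $B(\alpha,\beta)=\Gamma(\alpha)\Gamma(\beta)/\Gamma(\alpha+\beta)$. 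This yields $I^{\alpha}I^{\beta}f=I^{\alpha+\beta}f$ and, in particular, $I^{1-\alpha}I^{\alpha}f=I^{1}f=\int_{0}^{x}f(s)\,ds$.

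For part (1), I would then simply compute $D^{\alpha}I^{\alpha}f=\frac{d}{dx}I^{1-\alpha}I^{\alpha}f=\frac{d}{dx}\int_{0}^{x}f(s)\,ds=f(x)$, the last equality holding for almost every $x$ by the Lebesgue differentiation theorem since $f\in L^{1}(0,1)$.

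For part (2), I would set $h=I^{1-\alpha}f=D^{\alpha-1}f$, so that $D^{\alpha}f=h'$ and, by the semigroup law, $I^{\alpha}h=I^{1}f$, whence $\frac{d}{dx}I^{\alpha}h=f$ almost everywhere. Using absolute continuity to write $h(s)=h(0)+\int_{0}^{s}h'(\tau)\,d\tau$ and inserting this into $I^{\alpha}h$, Fubini gives
$$
I^{\alpha}h(x)=\frac{h(0)}{\Gamma(\alpha+1)}x^{\alpha}+\frac{1}{\Gamma(\alpha+1)}\int_{0}^{x}(x-\tau)^{\alpha}h'(\tau)\,d\tau.
$$
Differentiating in $x$ (the boundary contribution vanishes because $(x-\tau)^{\alpha}$ equals zero at $\tau=x$ for $\alpha>0$) and using $\Gamma(\alpha+1)=\alpha\Gamma(\alpha)$ produces $\frac{d}{dx}I^{\alpha}h=\frac{h(0)}{\Gamma(\alpha)}x^{\alpha-1}+I^{\alpha}h'$. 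Comparing this with $\frac{d}{dx}I^{\alpha}h=f$ and recalling $h'=D^{\alpha}f$ and $h(0)=[D^{\alpha-1}f]_{x=0}$ yields exactly $I^{\alpha}D^{\alpha}f(x)=f(x)-\frac{[D^{\alpha-1}f]_{x=0}}{\Gamma(\alpha)}x^{\alpha-1}$.

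The main obstacle is the singularity of the kernel $(x-s)^{\alpha-1}$ at $s=x$, which makes a direct integration by parts in $I^{\alpha}h'$ illegitimate, since the boundary term diverges. The device that circumvents this is to compute $I^{\alpha}h$ first---raising the order by one turns the kernel into $(x-\tau)^{\alpha}$, which vanishes at the endpoint---and only afterwards to differentiate. This is also where the absolute continuity of $h=I^{1-\alpha}f$ enters, being precisely what is needed to represent $h$ through its derivative and to guarantee the almost-everywhere existence of $D^{\alpha}f$.
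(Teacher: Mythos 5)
Your proposal is correct, but there is nothing in the paper to compare it with: the paper does not prove this lemma at all, it simply quotes it from the cited monographs of Kilbas--Srivastava--Trujillo and Podlubny. What you have written is, in substance, the standard proof found in those references: the semigroup identity $I^{\alpha}I^{\beta}f=I^{\alpha+\beta}f$ obtained from Fubini and the Beta integral, followed by the Lebesgue differentiation theorem for part (1), and, for part (2), the representation of $h=I^{1-\alpha}f=D^{\alpha-1}f$ through its derivative together with the order-raising device that replaces the singular kernel $(x-s)^{\alpha-1}$ by $(x-\tau)^{\alpha}$ before differentiating. Two remarks. First, as you yourself observe, for a general $f\in L^{1}(0,1)$ the function $I^{1-\alpha}f$ need not be absolutely continuous, so $D^{\alpha}f$ need not exist at all; the lemma as stated in the paper tacitly assumes its existence, while the textbook versions make the hypothesis that $I^{1-\alpha}f$ is absolutely continuous explicit. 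Your proof uses exactly this hypothesis, so it proves the lemma in its correct form and in effect repairs the loose statement. Second, your differentiation of $\int_{0}^{x}(x-\tau)^{\alpha}h'(\tau)\,d\tau$ via a Leibniz rule with vanishing boundary term needs a justification beyond pointwise calculus, since $h'$ is merely integrable; the clean way, consistent with your own strategy, is one more application of Fubini: writing $(x-\tau)^{\alpha}=\alpha\int_{\tau}^{x}(u-\tau)^{\alpha-1}\,du$ shows that this integral equals $\Gamma(\alpha+1)\int_{0}^{x}I^{\alpha}h'(u)\,du$, an absolutely continuous function of $x$ whose derivative is $\Gamma(\alpha+1)\,I^{\alpha}h'(x)$ almost everywhere. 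With that line inserted, your argument is complete and rigorous.
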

\subsection{Partially ordered Banach space}
Throughout this paper  $X:=(X, \|\cdot\|)$ stands for a real Banach space. Let $P:=P_{X}$  be a  nonempty subset of $X$.
\begin{defn}
$P$ is called a \emph{closed convex cone} (or shortly, \emph{cone}) with vertex 0 if
\begin{itemize}
  \item [(1)] $P$ is closed, non-empty and $P\neq \{0\};$
  \item [(2)]  $\alpha x+\beta y \in P$ for all $x, y \in P$ and non-negative real numbers $\alpha, \beta;$
  \item [(3)] $P\cap (-P)=\{0\};$
  \item [(4)] a cone $P$ is called to be positive if
  $P\circ P\subseteq P,$ where $\circ$ is a multiplication composition in $X.$
\end{itemize}
\end{defn}
We define an order relation $\preceq$ in X as follows.
Let $x,y \in X.$ Then $x\preceq y$ if and only if $y-x\in P.$
The notation $y\prec x$ indicates that $y\preceq x$  and $x\neq y,$
while $y\ll x$ will show $x-y \in \text{int} P,$ where $\text{int} P$
denotes the interior of $P$. From now on, it is
assumed that $\text{int} P\neq \O.$
The triple $(X, \|\cdot\|, \preceq)$
is called a \textit{partially ordered Banach space}.
A cone $P$ is said to be \textit{normal} if the 
norm
$\|\cdot \|$ is semi-monotone increasing on $P,$
that is, there is a constant $N>0$ such that
$\|x\|\leq N \|y\|$ for all $x,y \in X$ with $x\preceq y.$
It is known that if the cone $P$ is normal in $X,$
then every order-bounded set in $X$ is norm-bounded.
The details of cones and their properties appear in Heikkil\"{a} and Lakshmikantham \cite{Hei}.
Next we define an \textit{upper comparable property}
and use in the proof of the main theorem.
\begin{defn}
We say that a partially ordered Banach space 
$(X, \|\cdot\|, \preceq)$ has an upper comparable property if for every $x,y\in X$
 there exists $z\in X$ such that 
 $x\preceq z$ and $y\preceq z.$
\end{defn}

\section{Main Results}
The aim of this section is to prove the following  a new version of Kransoselskii's fixed-point theorem
under a ($\psi, \theta, \varphi$)-weak contraction. We need the following definition.
\begin{defn}
The function $\psi:[0, \infty)\rightarrow [0, \infty)$
 is called a generalized altering distance function if the following properties are satisfied:
 \begin{itemize}
   \item [(1)] $\psi$ is a lower semi-continuous and non-decreasing;
   \item [(2)] $\psi(t)=0$ if and only if $t=0.$
 \end{itemize}
\end{defn}
Our main theorem is the following.
\begin{thm}\label{Main1}
Let $(X, \|\cdot\|, \preceq)$ be a partially ordered Banach space with an upper comparable property. Assume that $S$ be
a nonempty, closed, convex, and bounded subset of $X.$ Let $A:X\rightarrow X$ and
$B:S\rightarrow X$ be two operators verifying the following hypothesises:
\begin{itemize}
  \item [(a)] there exist a generalized  altering distance function $\psi:[0, \infty)\rightarrow [0, \infty),$ an upper semi-continuous function
  $\theta:[0, \infty)\rightarrow [0, \infty),$ and a lower semi-continuous function $\varphi:[0, \infty)\rightarrow [0, \infty)$
  such that
  $$
  \psi(\|Ax-Ay\|)\leq \theta(\|x-y\|)-\varphi(\|x-y\|), \,\,\,\,\, \text{for}\,\,\,\,\, x\succeq y,
  $$
  where $\theta(0)=\varphi(0)=0$ and $\psi(t)-\theta(t)+\varphi(t)>0$ for all $t>0;$
  \item [(b)] there exists $x_{0}\in X$ such that $x_{0}\preceq Ax_{0};$
      \item [(c)] $A$ is continuous, non-decreasing (w.r.t. $\preceq$) and $(I-A)^{-1}$ exists and it is continuous;
  \item [(d)] $B$ is continuous, $BS$ is contained  in a compact subset of positive cone $P;$
  \item [(e)] $x=Ax+By$ $\Rightarrow$ $x\in S$ for all $y\in S.$
\end{itemize}
Then the operator $A+B$ has a fixed point  in $S.$
\end{thm}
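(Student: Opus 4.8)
The plan is to reduce the statement to Schauder's theorem by inverting the "small" operator $A$ and composing with $B$, in the spirit of the classical Krasnoselskii argument, but with the weak contraction and the order structure replacing the usual contraction. For a fixed $y\in S$ I would introduce the shifted operator $A_y\colon X\to X$, $A_yx=Ax+By$. Since $A_y$ differs from $A$ by the constant $By$, one has $A_yx-A_yx'=Ax-Ax'$, so $A_y$ inherits from $A$ both the monotonicity of (c) and the $(\psi,\theta,\varphi)$-weak contraction of (a) on comparable pairs. Moreover, because $By$ lies in the positive cone $P$ by (d), we get $A_yx_0=Ax_0+By\succeq Ax_0\succeq x_0$ using (b); thus $x_0\preceq A_yx_0$ and the monotone iteration is primed.

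First I would show that $A_y$ has a fixed point via the iteration $x_{n+1}=A_yx_n$. Monotonicity of $A_y$ together with $x_0\preceq x_1$ gives inductively that $\{x_n\}$ is non-decreasing, so all iterates are pairwise comparable and the weak contraction applies along the sequence. Writing $d_n=\|x_{n+1}-x_n\|$, hypothesis (a) yields $\psi(d_n)\le\theta(d_{n-1})-\varphi(d_{n-1})<\psi(d_{n-1})$ whenever $d_{n-1}>0$ (using $\psi-\theta+\varphi>0$); since $\psi$ is non-decreasing this forces $d_n<d_{n-1}$, so $d_n\downarrow r\ge 0$. Passing to the limit and using that $\psi$ and $\varphi$ are lower semi-continuous while $\theta$ is upper semi-continuous gives $\psi(r)\le\theta(r)-\varphi(r)$, which contradicts $\psi(t)-\theta(t)+\varphi(t)>0$ unless $r=0$; hence $d_n\to 0$.

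The main obstacle is upgrading $d_n\to 0$ to the Cauchy property of $\{x_n\}$. I would argue by contradiction in the standard weak-contraction fashion: assuming $\{x_n\}$ is not Cauchy, extract indices $m_k>n_k>k$ with $\|x_{m_k}-x_{n_k}\|\ge\varepsilon$ and $m_k$ minimal, show the relevant gaps converge to $\varepsilon$, and then apply the contraction inequality to the comparable pair $x_{n_k-1}\preceq x_{m_k-1}$ (comparability is exactly what the increasing sequence supplies, which is the reason the order structure is indispensable here). Taking the limit with the three semicontinuity hypotheses again produces $\psi(\varepsilon)\le\theta(\varepsilon)-\varphi(\varepsilon)$, contradicting (a). Completeness of $X$ then gives $x_n\to x^{\ast}$, and continuity of $A$ (hence of $A_y$) yields $x^{\ast}=A_yx^{\ast}=Ax^{\ast}+By$, i.e. $(I-A)x^{\ast}=By$. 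By (c) the operator $(I-A)^{-1}$ exists and is continuous, so this fixed point is unique and equals $(I-A)^{-1}(By)$; the upper comparable property is what lets me compare two candidate fixed points through a common upper bound in order to conclude uniqueness.

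Finally I would assemble the Krasnoselskii conclusion. Define $N\colon S\to X$ by $N(y)=(I-A)^{-1}(By)$; by the previous step $N(y)$ is the unique solution of $x=Ax+By$, so hypothesis (e) gives $N(S)\subseteq S$. As the composition of the continuous $B$ with the continuous $(I-A)^{-1}$, the map $N$ is continuous; and since $BS$ sits in a compact subset $K$ of $P$ by (d), we have $N(S)=(I-A)^{-1}(BS)\subseteq(I-A)^{-1}(K)$, a set contained in the compact continuous image of $K$, hence relatively compact. Thus $N$ is a completely continuous self-map of the closed, convex, bounded set $S$, and Schauder's fixed-point theorem provides $x^{\ast}\in S$ with $N(x^{\ast})=x^{\ast}$. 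Unwinding the definition, $x^{\ast}=(I-A)^{-1}(Bx^{\ast})$ means $x^{\ast}-Ax^{\ast}=Bx^{\ast}$, i.e. $(A+B)x^{\ast}=x^{\ast}$, which is the desired fixed point of $A+B$ in $S$.
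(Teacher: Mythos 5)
Your proposal is correct and follows essentially the same route as the paper: a monotone Picard iteration for the shifted operator $x\mapsto Ax+By$ (primed by hypothesis (b) and the positivity of $By$), the $(\psi,\theta,\varphi)$-weak-contraction argument with semicontinuity to get $\|x_{n+1}-x_n\|\to 0$ and the Cauchy property, and finally Schauder's theorem applied to the completely continuous map $(I-A)^{-1}B$ on $S$. The one small difference is that you note uniqueness of the solution of $x=Ax+By$ already follows from the injectivity of $I-A$ guaranteed by (c), which is a cleaner shortcut than the paper's longer uniqueness argument through the upper comparable property (the paper only invokes $(I-A)^{-1}$ afterwards, to define the map fed into Schauder).
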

\begin{proof}
Let us fix arbitrarily $y\in S.$ Define $fx:=Ax+By.$ We show that $f$ has a fixed point in $S.$
By hypothesis $(b),$ there exists $x_{0}\in X$ such that $x_{0}\preceq Ax_{0}.$ Since $By\in P$ for all
$y\in S$ one has $0\preceq By.$ It implies $x_{0}\preceq Ax_{0}\preceq Ax_{0}+By=fx_{0}:=x_{1}.$ Because the operator
$A$ is non-decreasing we have $Ax_{0}\preceq Ax_{1}=A(Ax_{0}+By).$ It implies
$x_{1}=Ax_{0}+By\preceq Ax_{1}+By=A(Ax_{0}+By)+By=f^{2}x_{0}:=x_{2}.$ Proceeding by induction, we
obtain $x_{n+1}:=fx_{n}=Ax_{n}+By$ such that $x_{n}\preceq x_{n+1}$ for all $n\geq 0.$ If
$x_{n}=x_{n+1}$ for some  $n\geq 0$ then $f$ has a fixed point. Assume that
$x_{n}\neq x_{n+1}$ for all $n\geq 0.$ By using hypothesis $(a),$ we get
\begin{equation}\label{Eq1}
\begin{split}
\psi(\|x_{n}-x_{n+1}\|)& =\psi(\|fx_{n-1}-fx_{n}\|)\\
 & = \psi(\|Ax_{n-1}-Ax_{n}\|)\leq \theta(\|x_{n-1}-x_{n}\|)-\varphi(\|x_{n-1}-x_{n}\|).
\end{split}
\end{equation}
On the other hand we have $\psi(\|x_{n-1}-x_{n}\|)-\theta(\|x_{n-1}-x_{n}\|)+\varphi(\|x_{n-1}-x_{n}\|)>0$
since $\|x_{n-1}-x_{n}\|>0.$ It implies
$$
\frac{\psi(\|x_{n}-x_{n+1}\|)}{\psi(\|x_{n-1}-x_{n}\|)}\leq \frac{\theta(\|x_{n-1}-x_{n}\|)-\varphi(\|x_{n-1}-x_{n}\|)}{\psi(\|x_{n-1}-x_{n}\|)}<1.
$$
Thus
\begin{equation}\label{Eq2}
\psi(\|x_{n}-x_{n+1}\|)<\psi(\|x_{n-1}-x_{n}\|).
\end{equation}
Since $\psi$ is a generalized altering distance function we get
\begin{equation}\label{Eq3}
\|x_{n}-x_{n+1}\|<\|x_{n-1}-x_{n}\|.
\end{equation}
Hence the sequence $\big(\|x_{n}-x_{n+1}\|\big)_{n}$ is decreasing  and bounded below.
We show that 
\begin{equation}\label{20201}
\lim_{n\rightarrow \infty}\|x_{n}-x_{n+1}\|=0.
\end{equation}
 Suppose $\lim_{n\rightarrow \infty}\|x_{n}-x_{n+1}\|=r\neq 0.$ 
For $k\geq 1,$ define
\begin{align*}
a_{k} &= \sup\Big\{\theta(\|x_{n-1}-x_{n}\|)-\varphi(\|x_{n-1}-x_{n}\|):\,\,\,n\geq k\Big\};\\
b_{k} &= \inf\Big\{\psi(\|x_{n}-x_{n+1}\|):\,\,\,\, n\geq k\Big\}.
\end{align*}
It is clear
\begin{align*}
a_{n} & \geq\theta(\|x_{n-1}-x_{n}\|)-\varphi(\|x_{n-1}-x_{n}\|)\\
b_{n} & \leq\psi(\|x_{n}-x_{n+1}\|).
\end{align*}
These and from (\ref{Eq1}) it follows that
\begin{equation}\label{mayEq1}
b_{n}\leq \psi(\|x_{n}-x_{n+1}\|)\leq \theta(\|x_{n-1}-x_{n}\|)-\varphi(\|x_{n-1}-x_{n}\|)\leq a_{n}.
\end{equation}
By using the definitions of $\psi, \theta$ and $\varphi$ we get
\begin{equation}
\begin{aligned}[b]
\lim_{n\rightarrow \infty} a_{n} &=\limsup_{n\rightarrow \infty}\theta(\|x_{n-1}-x_{n}\|)-\varphi(\|x_{n-1}-x_{n}\|)\leq\theta (r)-\varphi(r),\\
\lim_{n\rightarrow \infty} b_{n} &=\liminf_{n\rightarrow \infty}\psi(\|x_{n}-x_{n+1}\|) \geq\psi(r).
\end{aligned}
\label{mayEq2}
\end{equation}
As a consequence of (\ref{mayEq1}) and (\ref{mayEq2}) we obtain
$$
\psi(r)\leq \theta (r)-\varphi(r)
$$
which is a contradiction to hypothesis $(a).$
Hence $r=0.$ Next we show that the sequence $(x_{n})$ is a Cauchy sequence in $X.$
Assume the contrary, that is, $(x_{n})$ is not a Cauchy sequence.
Then we can find an $\varepsilon>0$ and  sub-sequences $(x_{n_{i}})$ and
$(x_{m_{i}})$ of $(x_{n})$ with $n_{i}>m_{i}>i$ such that
\begin{equation}\label{Eq4}
\|x_{m_{i}}-x_{n_{i}}\|\geq\varepsilon.
\end{equation}
Let $n_{i}$ be the smallest index with $n_{i}>m_{i}>i$ and satisfying (\ref{Eq4}).
This means
\begin{equation}\label{Eq5}
\|x_{m_{i}}-x_{n_{i}-1}\|<\varepsilon.
\end{equation}
By using the triangular inequality we get
\begin{equation}\label{Eq6}
\|x_{m_{i}}-x_{n_{i}}\|\leq \|x_{m_{i}}-x_{n_{i}-1}\|+\|x_{n_{i}-1}-x_{n_{i}}\|.
\end{equation}
Combining (\ref{Eq4})-(\ref{Eq6}) we get
$$
\varepsilon \leq \|x_{m_{i}}-x_{n_{i}}\|<\|x_{n_{i}-1}-x_{n_{i}}\|+\varepsilon.
$$
Taking the limit as $i\rightarrow \infty$ we have
\begin{equation}\label{Eq7}
\lim_{i\rightarrow \infty}\|x_{m_{i}}-x_{n_{i}}\|=\varepsilon.
\end{equation}
On the other hand, the triangular inequality
implies
\begin{equation}\label{Eq8}
\|x_{m_{i}}-x_{n_{i}-1}\|\leq \|x_{m_{i}}-x_{n_{i}}\|+\|x_{n_{i}}-x_{n_{i}-1}\|.
\end{equation}
Letting $i\rightarrow \infty$ in (\ref{Eq6}) and (\ref{Eq8}) we obtain
$$
\varepsilon \leq \lim_{i\rightarrow \infty}\|x_{m_{i}}-x_{n_{i}-1}\|\leq\varepsilon.
$$
Hence
\begin{equation}\label{Eq9}
\lim_{i\rightarrow \infty}\|x_{m_{i}}-x_{n_{i}-1}\|=\varepsilon.
\end{equation}
Similarly, it can be shown that
\begin{equation}\label{Eq10}
\lim_{i\rightarrow \infty}\|x_{m_{i}+1}-x_{n_{i}}\|=\varepsilon.
\end{equation}
From hypothesis $(a)$ it follows that
\begin{equation}\label{Eq11}
\begin{split}
\psi(\|x_{m_{i}+1}-x_{n_{i}}\|)&=\psi(\|fx_{m_{i}}-fx_{n_{i}-1}\|)
=\psi(\|Ax_{m_{i}}-Ax_{n_{i}-1}\|)\\
&\leq \theta(\|x_{m_{i}}-x_{n_{i}-1}\|)-\varphi(\|x_{m_{i}}-x_{n_{i}-1}\|)
\end{split}
\end{equation}
since $x_{m_{i}}\preceq x_{n_{i}-1}.$
The same manner as in (\ref{mayEq1}) and (\ref{mayEq2})
it can be shown
 \begin{equation}
\begin{aligned}[b]
&\limsup_{i\rightarrow \infty}\theta(\|x_{m_{i}}-x_{n_{i}-1}\|)-\varphi(\|x_{m_{i}}-x_{n_{i}-1}\|)\leq\theta (\varepsilon)-\varphi(\varepsilon),\\
&\liminf_{i\rightarrow \infty}\psi(\|x_{m_{i}+1}-x_{n_{i}}\|)\geq
 \psi(\varepsilon),
\end{aligned}
\label{mayEq6}
\end{equation}
and $\psi(\varepsilon)\leq\theta (\varepsilon)-\varphi(\varepsilon)$ which is a contradiction due to $\varepsilon>0.$
Therefore $(x_{n})$ is a Cauchy sequence. Since $X$ is a Banach space
there exists $x^{*}\in X$ such that
$$
\lim_{n\rightarrow \infty}x_{n}=x^{*}.
$$
According to hypothesises $(c)$ and $(d)$ the operator $f$
is continuous, thus
$x^{*}=\lim_{n\rightarrow \infty}x_{n+1}=\lim_{n\rightarrow \infty}fx_{n}=fx^{*}.$
Hence $x^{*}\in X$ is a fixed point of $f.$
Next we show that $f$ has a unique fixed point.
Suppose $f$ has another fixed point $y^{*}\in X.$ Since $X$ has an upper comparable property
there exists $z\in X$ such that $x^{*}\preceq z$ and $y^{*}\preceq z.$ First, we assume 
$x^{*}\prec z$ and $y^{*}\prec z.$ In this case, because the operator $A$ is non-decreasing we have $Ax^{*}\preceq Az$ it implies $Ax^{*}+By\preceq Az+By$ that is $fx^{*}\preceq fz.$ Since $x^{*}$ is a fixed point of $f$ we have $x^{*}\preceq fz.$ Proceeding by induction, we obtain $x^{*}\preceq z_{n}$  for all $n\geq 1$ where $z_{n}:=f^{n}z.$ For definiteness we assume
$x^{*}\neq z_{n}$ for all $n\geq 1.$
By using hypothesis $(a)$ we get 
\begin{equation}\label{Eq120}
\begin{split}
\psi(\|x^{*}-z_{n+1}\|)& =\psi(\|fx^{*}-fz_{n}\|)\\
 & = \psi(\|Ax^{*}-Az_{n}\|)\leq \theta(\|x^{*}-z_{n}\|)-\varphi(\|x^{*}-z_{n}\|).
\end{split}
\end{equation}
On the other hand we have $\psi(\|x^{*}-z_{n}\|)-\theta(\|x^{*}-z_{n}\|)+\varphi(\|x^{*}-z_{n}\|)>0$
since $\|x^{*}-z_{n}\|>0.$ It implies
$$
\frac{\psi(\|x^{*}-z_{n+1}\|)}{\psi(\|x^{*}-z_{n}\|)}\leq \frac{\theta(\|x^{*}-z_{n}\|)-\varphi(\|x^{*}-z_{n}\|)}{\psi(\|x^{*}-z_{n}\|)}<1.
$$
Thus
\begin{equation}\label{Eq220}
\psi(\|x^{*}-z_{n+1}\|)<\psi(\|x^{*}-z_{n}\|).
\end{equation}
Since $\psi$ is a generalized altering distance function we get
\begin{equation}\label{Eq320}
\|x^{*}-z_{n+1}\|<\|x^{*}-z_{n}\|.
\end{equation}
Hence the sequence $\big(\|x^{*}-z_{n}\|\big)_{n}$ is decreasing  and bounded below.
The same manner as in the proof of relation 
(\ref{20201}) it can be shown that 
$$
\lim_{n\rightarrow \infty}\|x^{*}-z_{n}\|=0.
$$
Similarly, we can deduce that 
$$
\lim_{n\rightarrow \infty}\|y^{*}-z_{n}\|=0.
$$
On the other hand we have 
$$
\|x^{*}-y^{*}\|\leq \|x^{*}-z_{n}\|+\|y^{*}-z_{n}\|.
$$
Taking the limit as $n\rightarrow \infty$
we get $x^{*}=y^{*}.$ 
Now we assume $x^{*}=z$ and $y^{*}\prec z$ that is $y^{*}\prec x^{*}.$ It implies $\|x^{*}-y^{*}\|>0.$
By using hypothesis $(a)$ we obtain
 \begin{equation}\label{Eq1201}
\begin{split}
\psi(\|x^{*}-y^{*}\|)& =\psi(\|fx^{*}-fy^{*}\|)\\
 & = \psi(\|Ax^{*}-Ay^{*}\|)\leq \theta(\|x^{*}-y^{*}\|)-\varphi(\|x^{*}-y^{*}\|).
\end{split}
\end{equation}
On the other hand we have $\psi(\|x^{*}-y^{*}\|)-\theta(\|x^{*}-y^{*}\|)+\varphi(\|x^{*}-y^{*}\|)>0$
since $\|x^{*}-y^{*}\|>0.$ 
But this contradicts the inequality (\ref{Eq1201}). Hence $x^{*}=y^{*}.$ The case $x^{*}\prec z$ and $y^{*}=z$ is similar to the above case and the case 
$x^{*}=z$ and $y^{*}=z$ is trivial.  
So, in all cases, we have shown that 
$x^{*}=y^{*}.$ 
Hence $f$ has a unique fixed point, that is, there exists a unique $x^{*}\in X$  such that 
\begin{equation}\label{Eq12}
x^{*}=Ax^{*}+By.
\end{equation}
Hypothesis $(e)$ implies that $x^{*}\in S.$
From the equality (\ref{Eq12}) it follows that  $(I-A)x^{*}=By$ for all $y\in S.$
Since the operator $(I-A)^{-1}$ exists and continuous we have
$x^{*}=(I-A)^{-1}By\in S$ for all $y\in S.$ Now according to $(d)$
the set $BS$ is contained in a compact subset of $P$, while $(I-A)^{-1}$ is continuous, and so $(I-A)^{-1}BS$
is contained in a compact subset of $P.$ (For the proof of this in general metric spaces, see \cite{Kr} pp. 412).
From hypothesis $(e)$ and equality  (\ref{Eq12})
it follows that  $(I-A)^{-1}BS$ is contained in a compact subset of the closed set $S.$
By Schauder's second theorem (see \cite{Smart} pp. 25) the operator $(I-A)^{-1}B$ has a fixed point in $S,$
that is, there exists $z^{*}\in S$ such that $z^{*}=(I-A)^{-1}Bz^{*}.$ This implies
$Az^{*}+Bz^{*}=z^{*}.$ Theorem \ref{Main1} is proved.
\end{proof}

\begin{rem} Note that
\begin{itemize}
\item  the main idea of the proof of the existence of a fixed point of the mapping
$f$ has been borrowed from \cite{Habib};
\item in the case of $\psi$-altering distance function, the $(\psi,\theta,\varphi)$-weak
contraction condition has been  successfully applied in multidimensional fixed point theorems and their applications to the system of matrices equations and nonlinear integral equations (see, for instance \cite{akhad1}-\cite{akhad3});
\item the generalized $(\psi,\theta,\varphi)$-weak
contraction condition extends the notion of $D$-contraction condition which is defined by a
dominating function or, in short, $D$-function (see, for instance \cite{Dhag1} and \cite{Dhag6, Dhag7}).
\item  Theorem \ref{Main1} extends the main theorem of \cite{Dhag0}.
\end{itemize}
\end{rem}

\section{Applications of Theorem \ref{Main1}}
In this section, by applying Theorem \ref{Main1} we study the existence of
a solution of the fractional hybrid differential equation (\ref{IE1})
under the following assumptions.

\subsubsection{Hypothesis for FHDE (\ref{IE1})}
We assume:
\begin{itemize}
\item[(H1)] The function $F_{t}:\mathbb{R}\rightarrow \mathbb{R}$ defined as
$F_{t}(x):=x-f(t,x)$ is strictly increasing in $\mathbb{R}$ for all $t\in J.$
\item[(H2)] The function $f(t, \cdot)$ satisfies  the following weak contraction condition
$$
|f(t,x)-f(t,y)|\leq \arctan\big(|x-y|\big)
$$
for all $t\in J$ and $x,y\in \mathbb{R}.$
\item[(H3)] The function $f(t, 0)$ satisfies
$$
f(t,0)-f(t_{0}, x_{0})+x_{0}\geq 0
$$
for all $t\in J.$
\item[(H4)] The function $g$ is non-negative and there exists a continuous function $h\in C(J, \mathbb{R})$ such that
$$
0\leq g(t,x,y)\leq h(t)
$$
for almost all $t\in J$ and for all $x,y\in \mathbb{R}.$
\end{itemize}
The following lemma is useful in what follows.

\begin{lem}[\label{Lu1}\cite{Lu}]
Assume that hypothesis (H1) holds. Then, for any $y\in C(J, \mathbb{R})$
and $\alpha\in (0,1)$ the function $x\in C(J, \mathbb{R})$
is a solution of  FHDE
\begin{equation}\label{4E2}
D^{\alpha}[x(t)-f(t, x(t))]=y(t), \,\, t\in J,
\end{equation}
with the initial condition $x(t_{0})=x_{0}$
if and only if $x(t)$ satisfies the hybrid integral equation (in short, HIE)
\begin{equation}\label{4E3}
x(t)=x_{0}-f(t_{0}, x_{0})+ f(t, x(t))
+\frac{1}{\Gamma(\alpha)}\int_{t_{0}}^{t}(t-s)^{\alpha-1}y(s)ds,\,\,\,t\in J.
\end{equation}
\end{lem}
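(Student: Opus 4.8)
The statement is an equivalence, so the plan is to prove the two implications separately, using Lemma 2.3 as the bridge between the fractional derivative $D^{\alpha}$ and the fractional integral $I^{\alpha}$. Throughout I would abbreviate $u(t):=x(t)-f(t,x(t))$, which is continuous on $J$ whenever $x\in C(J,\mathbb{R})$ because $f$ is continuous; note that the initial condition $x(t_{0})=x_{0}$ translates into $u(t_{0})=x_{0}-f(t_{0},x_{0})$. In this notation equation \eqref{4E2} reads $D^{\alpha}u(t)=y(t)$ and \eqref{4E3} reads $u(t)=x_{0}-f(t_{0},x_{0})+I^{\alpha}y(t)$, so the whole task is to show $D^{\alpha}u=y$ together with $u(t_{0})=x_{0}-f(t_{0},x_{0})$ is equivalent to the latter integral identity.

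For necessity, suppose $x$ solves \eqref{4E2} with $x(t_{0})=x_{0}$, so $D^{\alpha}u(t)=y(t)$ on $J$. I would apply $I^{\alpha}$ to both sides and invert the left-hand side with part (2) of Lemma 2.3, which recovers $u$ up to a multiple of $(t-t_{0})^{\alpha-1}$ carrying the boundary value of $D^{\alpha-1}u$ at $t_{0}$:
\[
u(t)=\frac{c}{\Gamma(\alpha)}(t-t_{0})^{\alpha-1}+\frac{1}{\Gamma(\alpha)}\int_{t_{0}}^{t}(t-s)^{\alpha-1}y(s)\,ds,\qquad c:=[D^{\alpha-1}u]_{t=t_{0}}.
\]
Since $y\in C(J,\mathbb{R})$ the integral term is continuous on $J$ and tends to $0$ as $t\to t_{0}^{+}$; matching this against the value of $u$ at $t_{0}$ imposed by $x(t_{0})=x_{0}$ is what fixes the constant as $u(t_{0})=x_{0}-f(t_{0},x_{0})$. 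Substituting back and recalling $u=x-f(\cdot,x(\cdot))$ then produces exactly \eqref{4E3}.

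For sufficiency, suppose $x\in C(J,\mathbb{R})$ satisfies \eqref{4E3}. First I would recover the initial condition: setting $t=t_{0}$ makes the integral vanish and leaves $x(t_{0})-f(t_{0},x(t_{0}))=x_{0}-f(t_{0},x_{0})$, i.e. $F_{t_{0}}(x(t_{0}))=F_{t_{0}}(x_{0})$. By hypothesis (H1) the map $F_{t_{0}}$ is strictly increasing, hence injective, which forces $x(t_{0})=x_{0}$. Next, rewriting \eqref{4E3} as $u(t)=(x_{0}-f(t_{0},x_{0}))+I^{\alpha}y(t)$ and applying $D^{\alpha}$, part (1) of Lemma 2.3 gives $D^{\alpha}I^{\alpha}y=y$, so that $D^{\alpha}u(t)=y(t)$ a.e.\ on $J$, which is \eqref{4E2}.

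The hard part will be the necessity step, and specifically the reconciliation of the prescribed data — which sits on the \emph{value} $x(t_{0})$ — with the Riemann-Liouville structure, whose natural initial datum attaches to the singular factor $(t-t_{0})^{\alpha-1}$ rather than to a constant. The delicate point is to justify, using continuity of $u$ on $J$ together with the vanishing of $I^{\alpha}y$ at $t_{0}$, that the inversion formula of Lemma 2.3(2) pins the integration constant exactly to $x_{0}-f(t_{0},x_{0})$. By contrast, the remaining manipulations — the limit of $I^{\alpha}y$ at $t_{0}$ and the almost-everywhere versus everywhere bookkeeping inherited from the Carath\'{e}odory setting — are routine.
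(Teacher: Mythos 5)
A preliminary remark: the paper never proves this lemma --- it is quoted from the cited work of Lu et al.\ and used as a black box --- so there is no in-paper proof to compare against, and your argument must be judged on its own terms. Judged so, it has a genuine gap, located exactly at the two places where you invoke fractional calculus, and in both cases the culprit is the same misconception: treating the Riemann--Liouville derivative as if it behaved like the Caputo derivative with respect to constants. The sufficiency direction is the clearest failure. From \eqref{4E3} you write $u(t)=(x_{0}-f(t_{0},x_{0}))+I^{\alpha}y(t)$ and apply $D^{\alpha}$, concluding $D^{\alpha}u=y$ from $D^{\alpha}I^{\alpha}y=y$. This silently uses $D^{\alpha}[\mathrm{const}]=0$, which is false for the Riemann--Liouville derivative: $D^{\alpha}c=\frac{c}{\Gamma(1-\alpha)}(t-t_{0})^{-\alpha}$. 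Hence the HIE actually yields $D^{\alpha}[x(t)-f(t,x(t))]=\frac{x_{0}-f(t_{0},x_{0})}{\Gamma(1-\alpha)}(t-t_{0})^{-\alpha}+y(t)$, which agrees with \eqref{4E2} only in the degenerate case $x_{0}=f(t_{0},x_{0})$. This step cannot be repaired; it is a defect of the lemma as stated (inherited from Lu et al.), and your write-up reproduces precisely the oversight that makes the lemma look true.

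The necessity direction fails for the mirror-image reason, at the step you yourself flagged as the hard part. The functions annihilated by the Riemann--Liouville $D^{\alpha}$ are the multiples of the singular kernel $(t-t_{0})^{\alpha-1}$, not the constants, so the inversion formula of Lemma 2.3(2) gives $u(t)=\frac{c}{\Gamma(\alpha)}(t-t_{0})^{\alpha-1}+I^{\alpha}y(t)$, where the ``integration constant'' $c$ multiplies a function that blows up as $t\to t_{0}^{+}$. Since $u$ is continuous on $J$, the only admissible value is $c=0$ (equivalently, $[D^{\alpha-1}u]_{t=t_{0}}=[I^{1-\alpha}u]_{t=t_{0}}=0$ holds automatically for bounded $u$). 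The ``matching'' you describe --- choosing $c$ so as to encode the value $u(t_{0})=x_{0}-f(t_{0},x_{0})$ --- is impossible: no multiple of $(t-t_{0})^{\alpha-1}$ equals a nonzero constant, and substituting $c=x_{0}-f(t_{0},x_{0})$ back would produce a singular term, not the constant term appearing in \eqref{4E3}. What continuity actually gives is $u\equiv I^{\alpha}y$, and then evaluation at $t_{0}$ forces $x_{0}-f(t_{0},x_{0})=u(t_{0})=0$. So \eqref{4E3} does follow in this direction, but only because its constant term is forced to vanish --- a conclusion structurally different from the one your argument is built to deliver, and one that exposes the fact that the stated equivalence can only hold when $x_{0}=f(t_{0},x_{0})$.
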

Now, we are in a position to prove the following existence theorem for FHDE (\ref{IE1}).
\begin{thm}\label{Main2}
Assume that hypotheses (H1)-(H4) hold. Then
FHDE (\ref{IE1}) has a solution  in $C(J, \mathbb{R}).$
\end{thm}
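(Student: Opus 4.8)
The plan is to reformulate (\ref{IE1}) as a fixed-point equation $x=Ax+Bx$ on the Banach space $X=C(J,\mathbb{R})$ (with the sup-norm, the cone $P$ of non-negative functions, and the induced pointwise order $\preceq$) and then invoke Theorem~\ref{Main1}. By Lemma~\ref{Lu1} applied with $y(t)=g(t,x(t),I^{\beta}x(t))$, a function $x\in C(J,\mathbb{R})$ solves (\ref{IE1}) if and only if it solves the associated HIE, so I would set
$$Ax(t)=x_{0}-f(t_{0},x_{0})+f(t,x(t)),\qquad Bx(t)=\frac{1}{\Gamma(\alpha)}\int_{t_{0}}^{t}(t-s)^{\alpha-1}g\big(s,x(s),I^{\beta}x(s)\big)\,ds.$$
Fixed points of $A+B$ are then exactly the HIE solutions, hence the solutions of (\ref{IE1}). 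Note that $X$ enjoys the upper comparable property, since for any $x,y$ the pointwise maximum $z=\max\{x,y\}\in C(J,\mathbb{R})$ dominates both.

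Next I would verify hypotheses (a)--(e). For (a), hypothesis (H2) gives $\|Ax-Ay\|\le\arctan(\|x-y\|)$, so the choice $\psi(t)=t$, $\theta(t)=t$, $\varphi(t)=t-\arctan t$ works: all three have the required semicontinuity and monotonicity, $\theta(0)=\varphi(0)=0$, one has $\psi(\|Ax-Ay\|)\le\arctan(\|x-y\|)=\theta(\|x-y\|)-\varphi(\|x-y\|)$, and $\psi(t)-\theta(t)+\varphi(t)=t-\arctan t>0$ for $t>0$. For (b) I would take the zero function $\mathbf{0}$; then $A\mathbf{0}(t)=f(t,0)-f(t_{0},x_{0})+x_{0}\ge0$ by (H3), i.e.\ $\mathbf{0}\preceq A\mathbf{0}$. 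For (c), continuity of $A$ is immediate from (H2); moreover $(I-A)x(t)=F_{t}(x(t))+f(t_{0},x_{0})-x_{0}$, so by (H1) each $F_{t}$ is continuous and strictly increasing, and since $f(t,\cdot)$ is bounded (again by (H2), $|f(t,x)|\le|f(t,0)|+\pi/2$) it is surjective onto $\mathbb{R}$, hence a homeomorphism; this yields existence and continuity of $(I-A)^{-1}$.

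For (d), hypothesis (H4) gives $Bx\ge0$, so $BS\subseteq P$; the bound $0\le g\le h$ makes $BS$ uniformly bounded, and a fractional-kernel estimate gives equicontinuity, whence Arzel\`a--Ascoli places $BS$ in a compact subset of $P$. For (e), if $x=Ax+By$ then the uniform bounds $|f(t,x)|\le|f(t,0)|+\pi/2$ and $\|By\|\le\|h\|a^{\alpha}/\Gamma(\alpha+1)$ produce an a priori radius $R$, so defining $S=\{x\in X:\|x\|\le R\}$ (nonempty, closed, convex, bounded) makes (e) automatic. With (a)--(e) established, Theorem~\ref{Main1} delivers a fixed point of $A+B$ in $S$, which is the sought solution in $C(J,\mathbb{R})$.

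I anticipate two obstacles. The first is the equicontinuity estimate for $BS$ in (d): because the kernel $(t-s)^{\alpha-1}$ is singular at $s=t$, the bound on $|Bx(t_{2})-Bx(t_{1})|$ must be obtained by splitting the difference of the two fractional integrals and controlling $\int|(t_{2}-s)^{\alpha-1}-(t_{1}-s)^{\alpha-1}|\,ds$ together with the tail $\int_{t_{1}}^{t_{2}}(t_{2}-s)^{\alpha-1}\,ds$; both tend to $0$ uniformly in $x\in S$, but this is the one genuinely technical computation. The second, and more delicate, is the monotonicity of $A$ demanded in (c): $A$ is non-decreasing precisely when $f(t,\cdot)$ is non-decreasing, and this is \emph{not} obviously forced by (H1)--(H2), since strict monotonicity of $F_{t}=\mathrm{id}-f(t,\cdot)$ does not by itself make $f(t,\cdot)$ increasing. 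I would scrutinise this point first and, if it does not follow from the standing hypotheses, record the monotonicity of $f(t,\cdot)$ explicitly.
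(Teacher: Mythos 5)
Your proposal follows the paper's proof essentially step for step: the same space $X=C(J,\mathbb{R})$ with the sup-norm and pointwise order, the same cone $P$, the same operators $A$ and $B$ obtained from Lemma~\ref{Lu1}, the same verification scheme (a)--(e), the same a priori bound defining $S$, and the same appeal to Theorem~\ref{Main1} and Arzel\`a--Ascoli. Your triple $\psi(t)=t$, $\theta(t)=t$, $\varphi(t)=t-\arctan t$ is an equivalent repackaging of the paper's choice $\psi(t)=t$, $\theta(t)=\arctan t$, $\varphi(t)=0$; both satisfy hypothesis (a) identically.

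Two of your side remarks are worth keeping, because they identify points the paper glosses over. First, your surjectivity observation is needed: strict monotonicity and continuity of $F_{t}$ alone do not make $F_{t}^{-1}$ defined on all of $\mathbb{R}$ (the paper's operator $T x(t)=F_{t}^{-1}(x(t))$ is only well defined on $X$ once $F_{t}$ is onto), and the bound $|f(t,x)|\leq |f(t,0)|+\pi/2$ from (H2) supplies exactly this. Second, and more seriously, the monotonicity worry you flag at the end is a genuine gap \emph{in the paper's own proof}: hypothesis (c) of Theorem~\ref{Main1} requires $A$ to be non-decreasing, the paper never verifies this, and it does not follow from (H1)--(H4). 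For instance $f(t,x)=-\tfrac{1}{2}\arctan x$ satisfies (H1) (since $F_{t}'(x)=1+\tfrac{1}{2}(1+x^{2})^{-1}>0$), satisfies (H2) (from $\tan u-\tan v=\sin(u-v)/(\cos u\cos v)\geq 2\tan\bigl(\tfrac{u-v}{2}\bigr)$ one gets $\tfrac{1}{2}|\arctan x-\arctan y|\leq\arctan|x-y|$), and satisfies (H3) with $x_{0}=0$, yet $f(t,\cdot)$ is strictly decreasing, so $A$ is not non-decreasing. Your proposed repair --- adding monotonicity of $f(t,\cdot)$ in $x$ as an explicit standing hypothesis --- is exactly what is needed, and it is satisfied by the paper's own illustrative example $f(t,x)=\tanh(t)\arctan(x+1)$.
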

\begin{proof}
Set $X=C(J, \mathbb{R}).$ Let $\|\cdot\|$ be the maximum norm
in $X,$ that is, $\|x\|=\max_{t\in J}|x(t)|.$ Clearly,
$X$ is a Banach space with respect to this norm. We claim that the space $X$ has an upper 
comparable property. Indeed, for any $x(t), y(t)\in X$ we can find $z(t)\in X$ as 
$z(t)=\max_{t\in J}\{x(t), y(t)\}.$
It is clear that $x(t)\leq z(t)$ and $y(t)\leq z(t).$ Define a subset $S$  of $X$ as follows.
$$
S=\Big\{x\in X:\,\,\, \|x\|\leq M\Big\}
$$
where $M=|x_{0}-f(t_{0}, x_{0})|+\frac{\pi}{2}+L+\frac{|J|^{\alpha}}{\Gamma(\alpha+1)}\|h\|$
and $L=\max_{t\in J} f(t, 0).$ Clearly, $S$ is a closed, convex and bounded subset of the Banach space
$X.$
Define $P=\big\{x\in X:\,\,\, x\geq 0\big\}.$
It is obvious that the set $P$ is a positive cone and normal in $X.$
For a given $y\in S,$ consider the following generalized fractional hybrid differential equation
involving  the Riemann-Liouville  differential and integral operators  orders of $0<\alpha<1$ and $\beta>0:$
\begin{equation}\label{4E1}
 \left\{\begin{array}{ll}
D^{\alpha}[x(t)-f(t, x(t))]=g(t, y(t), I^{\beta}(y(t))), \,\,\, \text{a.e.}
\,\,\, t\in J,\,\, \beta>0,\\
x(t_{0})=x_{0}
\end{array}
\right.
 \end{equation}
 where $J=[t_{0}, t_{0}+a],$ for some fixed $t_{0}, a \in \mathbb{R}^{+}$
 and $f\in C(J\times \mathbb{R}, \mathbb{R}),$
 $g\in \mathcal{C}(J\times \mathbb{R}\times \mathbb{R}, \mathbb{R}).$
 Assume $f$ and $g$ satisfy the assumptions (H1)-(H4). Then by Lemma \ref{Lu1}
  the equation (\ref{4E1}) is equivalent to the nonlinear HIE
  \begin{equation}\label{4E4}
  x(t)=x_{0}-f(t_{0}, x_{0})+f(t, x(t))+\frac{1}{\Gamma(\alpha)}\int_{t_{0}}^{t}(t-s)^{\alpha-1}
   g(s, y(s), I^{\beta}(y(s)))ds.
   \end{equation}
Define  operators $A:X\rightarrow X$ and $B:S\rightarrow X$ by
\begin{equation}\label{4E5}
Ax(t)=x_{0}-f(t_{0}, x_{0})+ f(t, x(t)),
\,\,\,\,t\in J,
\end{equation}
and
\begin{equation}\label{4E6}
By(t)=\frac{1}{\Gamma(\alpha)}\int_{t_{0}}^{t}(t-s)^{\alpha-1}
   g(s, y(s), I^{\beta}(y(s)))ds,
   \,\,\,\,t\in J.
\end{equation}
Then HIE (\ref{4E4}) is transformed into the operator equation as
\begin{equation}\label{4E7}
x(t)=Ax(t)+By(t),
\,\,\,\,t\in J.
\end{equation}
We will show that the operators $A$ and $B$ satisfy all hypothesises of Theorem \ref{Main1}.
First, we show that the operator  $A$ satisfies hypothesis $(a)$ of Theorem \ref{Main1} with
$$
\psi(t)=t,\,\,\,\theta(t)=\arctan(t)
\,\,\, \text{and}\,\,\, \varphi(t)=0.
$$
Let $x,y\in X.$ Then by  hypothesis (H2) we have
$$
|Ax(t)-Ay(t)|=|f(t, x(t))-f(t, y(t))|\leq \arctan(|x(t)-y(t)|)
\leq \arctan(\|x-y\|).
$$
Taking maximum over $t,$
we obtain
$$
\|Ax-Ay\|\leq \arctan(\|x-y\|).
$$
One can easily see that $\psi(t)-\theta(t)+\varphi(t)=t-\arctan(t)>0$
for all $t>0$ and $\theta(0)=\varphi(0)=0.$ Hence the operator  $A$ satisfies hypothesis $(a).$
Next, we show that $A$ satisfies hypothesis $(b).$
Let $x_{0}(t)\equiv 0.$ Then  by  (H3) we have
$$
Ax_{0}(t)=x_{0}-f(t_{0}, x_{0})+f(t, 0)\geq 0= x_{0}(t).
$$
Now we show that $(I-A)^{-1}$ exists and continuous.
By  hypotheses (H1) and (H2) the function
$F_{t}(x)=x-f(t,x)$ is strictly increasing and continuous
for all $t\in J.$ Therefore
$F^{-1}_{t}$ exists and continuous for all $t\in J.$
 Consider the operator $T:X\rightarrow X$
defined as $Tx(t)=F^{-1}_{t}(x(t)).$
One can easily see that
$$
(I-A)\circ T=F_{t}\circ F^{-1}_{t}=I
\,\,\,\,\text{and}\,\,\,\,
T\circ (I-A) =F^{-1}_{t}\circ F_{t}=I.
$$
Hence $T$ is the inverse of $I-A$ and continuous
since $F^{-1}_{t}$ is continuous.
Next, we show that the operator $B$ satisfies
hypothesis $(d)$ of Theorem \ref{Main1}.
By (H4) the function $g(\cdot, x,y)$ is almost everywhere non-negative  thus
for any $y\in S$ we get
$$
By(t)=\frac{1}{\Gamma(\alpha)}\int_{t_{0}}^{t}(t-s)^{\alpha-1}
   g(s, y(s), I^{\beta}(y(s)))ds\geq 0.
$$
Hence $BS$ is a subset of the positive cone $P.$
Now, we show that $B$ is continuous on $S.$
Let $(y_{n})$ be a sequence in $S$ converging to a point $y\in S.$
Then, by Lebesgue dominated convergence theorem we have
\begin{equation}\nonumber
\begin{split}
\lim_{n\rightarrow\infty}By_{n} & =
\lim_{n\rightarrow\infty} \frac{1}{\Gamma(\alpha)}\int_{t_{0}}^{t}(t-s)^{\alpha-1}
   g(s, y_{n}(s), I^{\beta}(y_{n}(s)))ds\\
 & = \frac{1}{\Gamma(\alpha)}\int_{t_{0}}^{t}(t-s)^{\alpha-1}
   \lim_{n\rightarrow\infty}g(s, y_{n}(s), I^{\beta}(y_{n}(s)))ds\\
 &=\frac{1}{\Gamma(\alpha)}\int_{t_{0}}^{t}(t-s)^{\alpha-1}
   g(s, y(s), I^{\beta}(y(s)))ds=By(t)
\end{split}
\end{equation}
for all $t\in J.$ Hence the operator $B$ is continuous.
Now, we show that $B$ is a compact operator on $S.$ It is enough
to show that $BS$ is a uniformly bounded and equicontinuous  set in $S.$
By hypothesis (H4) we have
\begin{equation}\nonumber
\begin{split}
|By(t)|&=\Big|\frac{1}{\Gamma(\alpha)}\int_{t_{0}}^{t}(t-s)^{\alpha-1}
   g(s, y(s), I^{\beta}(y(s)))ds\Big|\\
   & \leq\frac{1}{\Gamma(\alpha)}\int_{t_{0}}^{t}(t-s)^{\alpha-1}
   \Big|g(s, y(s), I^{\beta}(y(s)))\Big|ds\\
   &\leq \frac{1}{\Gamma(\alpha)}\int_{t_{0}}^{t}(t-s)^{\alpha-1}
   h(s)ds \leq \frac{|J|^{\alpha}}{\Gamma(\alpha+1)}\|h\|
   \end{split}
\end{equation}
for  $t\in J.$ Taking maximum over $t$ we get
$$
\|By\|\leq \frac{|J|^{\alpha}}{\Gamma(\alpha+1)}\|h\|.
$$
Thus the operator $B$ is uniformly bounded on $S.$
Let $t_{1}, t_{2}\in J$ with $t_{1}<t_{2}.$ Then, for any $x\in S,$
one has
\begin{equation}\nonumber
\begin{split}
|Bx(t_{1})&-Bx(t_{2})|=\\
&\Big|\frac{1}{\Gamma(\alpha)}\int_{t_{0}}^{t_{1}}(t_{1}-s)^{\alpha-1}
   g(s, y(s), I^{\beta}(y(s)))ds-
   \frac{1}{\Gamma(\alpha)}\int_{t_{0}}^{t_{2}}(t_{2}-s)^{\alpha-1}
   g(s, y(s), I^{\beta}(y(s)))ds\Big|\\
&\leq\Big|\frac{1}{\Gamma(\alpha)}\int_{t_{0}}^{t_{1}}(t_{1}-s)^{\alpha-1}
   g(s, y(s), I^{\beta}(y(s)))ds-
   \frac{1}{\Gamma(\alpha)}\int_{t_{0}}^{t_{1}}(t_{2}-s)^{\alpha-1}
   g(s, y(s), I^{\beta}(y(s)))ds\Big|\\
&+\Big|\frac{1}{\Gamma(\alpha)}\int_{t_{0}}^{t_{1}}(t_{2}-s)^{\alpha-1}
   g(s, y(s), I^{\beta}(y(s)))ds-
   \frac{1}{\Gamma(\alpha)}\int_{t_{0}}^{t_{2}}(t_{2}-s)^{\alpha-1}
   g(s, y(s), I^{\beta}(y(s)))ds\Big|\\
   & \leq \frac{\|h\|}{\Gamma(\alpha+1)}\Big[|(t_{2}-t_{0})^{\alpha}-(t_{1}-t_{0})^{\alpha}
   +(t_{2}-t_{1})^{\alpha}|+(t_{2}-t_{1})^{\alpha}\Big].
   \end{split}
\end{equation}
Hence, for  any $\varepsilon,$ there exists a $\delta>0$ such that
$$
|t_{1}-t_{2}|\leq \delta\,\,\,\,\,\Rightarrow
\,\,\,\,\, |Bx(t_{1})-Bx(t_{2})|\leq \varepsilon,
$$
for  $t_{1}, t_{2}\in J$ and for all $x\in S.$
This shows that $BS$ is  an equicontinuous set in $X.$
Hence according to  Arzela-Ascoli theorem  the set $BS$ is compact .
Finally, we show that hypothesis  $(e)$ of Theorem \ref{Main1} is satisfied.
Let $x\in X$ and $y\in S$ satisfy the equation $x=Ax+By.$ Then, by hypothesis (H2),
we have
\begin{equation}\nonumber
\begin{split}
|Ax(t)-By(t)|&\leq |Ax(t)|+|By(t)|\\
&\leq |x_{0}-f(t_{0}, x_{0})| +|f(t, x(t))|+
\Big|\frac{1}{\Gamma(\alpha)}\int_{t_{0}}^{t}(t-s)^{\alpha-1}
   g(s, y(s), I^{\beta}(y(s)))ds\Big|\\
&\leq |x_{0}-f(t_{0}, x_{0})| +|f(t, x(t))-f(t, 0)|+|f(t, 0)|\\
&+\frac{1}{\Gamma(\alpha)}\int_{t_{0}}^{t}(t-s)^{\alpha-1}
   |g(s, y(s), I^{\beta}(y(s)))|ds\\
&\leq |x_{0}-f(t_{0}, x_{0})| +
\arctan(\|x\|)+L+
\frac{|J|^{\alpha}}{\Gamma(\alpha+1)}\|h\|\\
&\leq |x_{0}-f(t_{0}, x_{0})| +\frac{\pi}{2}+L+
\frac{|J|^{\alpha}}{\Gamma(\alpha+1)}\|h\|.
 \end{split}
\end{equation}
Taking maximum over $t,$ we have
$$
\|x\|=\max_{t\in J}|Ax(t)-By(t)|\leq |x_{0}-f(t_{0}, x_{0})| +\frac{\pi}{2}+L+
\frac{|J|^{\alpha}}{\Gamma(\alpha+1)}\|h\|=M.
$$
Hence $x\in S.$ Thus all hypothesises  of Theorem \ref{Main1} are satisfied and so
the operator $A+B$ has a fixed point in $S,$ that is, there exists $z^{*}\in S$
such that $Az^{*}+Bz^{*}=z^{*}.$ As a result, FHDE (\ref{IE1}) has a solution in $S.$
This completes the proof of Theorem \ref{Main2}.

\section{Illustrative example}
Let $J=[0, 1].$ Denote by $X$  the set of continuous and non-negative functions
$f:J\rightarrow [0, \infty).$  In $X$  consider the following fractional hybrid differential equation:
 \begin{equation}\label{Example1}
 \left\{\begin{array}{ll}
D^{\frac{1}{2}}\big[x(t)-\tanh(t)\arctan\big(x(t)+1\big)\big]=t^{2}e^{t}|\sin\big(x(t)\big)|\frac{I^{\beta}\big(x(t)\big)}{1+I^{\beta}\big(x(t)\big)},
\\
x(0)=0,
\end{array}
\right.
 \end{equation}
where $t\in J$ and  $\beta>0.$
Observe that this equation is a special case of the FHDE (\ref{IE1}) if we set
$$
f(t,x(t))=\tanh(t)\arctan\big(x(t)+1\big)
$$
and
$$
g(t, x(t),I^{\beta}(x(t)))=t^{2}e^{t}|\sin\big(x(t)\big)|\frac{I^{\beta}\big(x(t)\big)}{1+I^{\beta}\big(x(t)\big)}.
$$
We show that the equation (\ref{Example1}) satisfies the hypothesis (H1)-(H4). We claim that the function
$$
F_{t}(x):=x-f(t,x)=x-\tanh(t)\arctan(x+1)
$$
is strictly increasing in $\mathbb{R}^{+}\cup\{0\}$ for all $t\in J.$  In order to show that
$F_{t}$ is strictly increasing it is sufficient to show that the partial derivative $\partial F_{t}/\partial x$
is positive  for all $t\in J.$ Indeed, since $\tanh(t)<1$ for $t\in J$ we have
$$
\frac{\partial F_{t}}{\partial x}=1-\frac{\tanh(t)}{1+(x+1)^{2}}>0.
$$
Hence, the hypothesis (H1) is satisfied.
Next we show that $f$ satisfies the hypothesis (H2). Let $x\in \mathbb{R}^{+}\cup\{0\}$
and $\delta>0.$  One can see
\begin{equation}\label{5Eq1}
\begin{split}
|f(t, x+\delta)-f(t, x)|&=\tanh(t)\Big|\arctan(x+\delta+1)-\arctan(x+1)\Big|\\
&\leq\tanh(t)\sup_{x\in \mathbb{R}^{+}\cup\{0\}}\Big|\arctan(x+\delta+1)-\arctan(x+1)\Big|.
 \end{split}
\end{equation}
We estimate $g_{\delta}(x):=\arctan(x+\delta+1)-\arctan(x+1).$
Since the arctangent function  is increasing by the arctangent subtraction formula we have
\begin{equation}\label{5Eq2}
\begin{split}
|g_{\delta}(x)|&=|\arctan(x+\delta+1)-\arctan(x+1)|\\
&=\Big|\arctan\Big(\frac{\delta}{1+(x+\delta+1)(x+1)}\Big)\Big|
\leq \arctan(\delta).
\end{split}
\end{equation}
For any $x,y\in \mathbb{R}^{+}\cup\{0\}$ with $x<y$
by setting $\delta:=y-x$ and combining the relations (\ref{5Eq1}) and (\ref{5Eq2}) we get
\begin{equation}\label{5Eq3}
\begin{split}
|f(t, x)-f(t, y)|&=|f(t, x+\delta)-f(t, x)|\\
&=\tanh(t)\Big|\arctan(x+\delta+1)-\arctan(x+1)\Big|\\
&\leq\tanh(t)\sup_{x\in \mathbb{R}^{+}\cup\{0\}}\Big|\arctan(x+\delta+1)-\arctan(x+1)\Big|\\
&\leq \tanh(t)\arctan(\delta)\leq \arctan(|x-y|)
 \end{split}
\end{equation}
since $\tanh(t)<1$ for $t\in J.$ Hence, $f$ satisfies the hypothesis (H2). It is easy to check
 hypothesis (H3) because
$$
f(t,0)-f(0,0)=\tanh(t)\arctan(1)-\tanh(0)\arctan(1)=\frac{\pi}{4}\tanh(t)\geq 0.
$$
Finally, we show that $g$ satisfies  hypothesis (H4) with $h(t)=t^{2}e^{t}.$
It is easy to see that $I^{\beta}\big(x(t)\big)\geq 0$ since $x(t)\geq 0.$ This implies
$$
g(t, x(t),I^{\beta}(x(t)))=t^{2}e^{t}|\sin\big(x(t)\big)|\frac{I^{\beta}\big(x(t)\big)}{1+I^{\beta}\big(x(t)\big)} \geq 0.
$$
On the other hand we have
$$
|\sin\big(x(t)\big)|\leq 1\,\,\,\, \text{and}\,\,\,\, \frac{I^{\beta}\big(x(t)\big)}{1+I^{\beta}\big(x(t)\big)}\leq 1
$$
which implies that
$$
g(t, x(t),I^{\beta}(x(t)))=t^{2}e^{t}|\sin\big(x(t)\big)|\frac{I^{\beta}\big(x(t)\big)}{1+I^{\beta}\big(x(t)\big)}
\leq t^{2}e^{t}.
$$
Thus
$$
0\leq g(t, x(t),I^{\beta}(x(t)))\leq t^{2}e^{t}.
$$
So, $g$ satisfies hypothesis (H4). Hence all (H1)-(H4) hypotheses 
are satisfied. Thus by Theorem \ref{Main2} we conclude that the hybrid differential equation
(\ref{Example1}) has a solution.
\end{proof}

\section*{Authors' contributions} All authors contributed equally and significantly in writing this article. All authors read and approve the final manuscript.

\section*{Competing interests} 
The authors declare that they have no competing interests.

\section*{Acknowledgement}
The authors are grateful to the editor-in-chief and referees for their accurate reading and useful
suggestions. We would like to thank  the
 Ministry of  Education of Malaysia for
 providing us with the Fundamental Research Grant Scheme 
 (FRGS/1/2018/STG06/UUM/02/13 Code  S/O 14192).

\bibliographystyle{amsplain,latexsym}

\end{document}